\theoremstyle{definition}
\newtheorem*{defn*}{Definition}
\theoremstyle{remark}
\theoremstyle{plain}
\newtheorem{thm}{Theorem}[section]
\newtheorem*{thm*}{Theorem}
\newtheorem{prop}[thm]{Proposition}
\newtheorem{cor}[thm]{Corollary}
\newtheorem*{thmDS}{Duffin-Schaeffer Conjecture (1941)}
\newtheorem*{thmBDV1}{Theorem BDV1 (2006)}
\newtheorem*{thmBDV2}{Theorem BDV2 (2006)}
\newtheorem*{thmSchl}{Schl\"{o}milch's Theorem (Late 19th Century)}
\newcommand{\norm}[1]{\ensuremath{\left\Vert #1 \right\Vert}}
\newcommand{\infabs}[1]{\ensuremath{\left\vert #1 \right\vert}}
\numberwithin{equation}{section}
\title{On a mixed Khintchine problem in Diophantine approximation}
\author{Stephen Harrap}
\address{S. Harrap, Department of Mathematics, Aarhus University,
 Ny Munkgade 118, BLG.1530, DK-8000 Aarhus C, Denmark}
\author{Tatiana Yusupova}
\address{T. Yusupova, Department of Mathematics, University of York,
 Heslington, York YO10 5DD, United Kingdom}
\date{}
\begin{document}

\begin{abstract}
We establish a `mixed' version of a fundamental theorem of Khintchine \cite{Khi} within the field of simultaneous Diophantine approximation. Via the 
notion of `ubiquity' we are able to make significant progress towards the completion of the metric theory associated with mixed problems in this setting. This includes finding a natural mixed analogue of the classical Jarn\'ik-Besicovich Theorem. Previous knowledge surrounding mixed problems in this area was almost entirely restricted to the multiplicative setup of de Mathan \& Teuli\'e \cite{dMT}, where the concept originated.
\end{abstract}

\maketitle
\section{Introduction}
\label{sec:introduction1}

A classical result of Dirichlet implies that for any real vector $\mathbf{x}=(x_1, x_2) \in [0, 1)^2$ the system of inequalities
\begin{equation*}
	\infabs{x_1-\frac{p_1}{q}} \, \leq \, \frac{1}{q^{3/2}},  \quad \quad \quad 
	\infabs{x_2-\frac{p_2}{q}} \, \leq \, \frac{1}{q^{3/2}}
\end{equation*}
is satisfied for infinitely many $p_1, p_2 \in \mathbb{Z}$ and $q \in \mathbb{N}$.
This tells us that any real vector can be approximated by rational vectors $(p_1/q, p_2/q)$ at a `rate' of $q^{-3/2}$. Moreover, this rate can be considered optimal in the obvious sense.
This idea can be generalised in the following manner.
Choose any positive real numbers $i$ and~$j$ satisfying
\begin{equation}
	\label{eqn:ijconditions}
	i, \, j \, > \, 0 \quad \text{ and } \quad i+j \, = \, 1 
\end{equation}
and let $\psi : \mathbb{N} \rightarrow \mathbb{R}_{\geq0}$ be any non-negative arithmetic function. 
For reasons that will become apparent we refer to $\psi$ as an \textit{approximating function}.
Consider the set $W(i, j, \psi)$ of real vectors $\mathbf{x}=(x_1, x_2) \in [0,1)^2$ for which the system of inequalities
\begin{equation}
	\label{eqn:inequalities}
	\infabs{x_1-\frac{p_1}{q}} \, \leq \, \frac{\psi^i(q)}{q},  \quad \quad \quad 
	\infabs{x_2-\frac{p_2}{q}} \, \leq \, \frac{\psi^j(q)}{q}
\end{equation}
is satisfied for infinitely many $p_1, p_2 \in \mathbb{Z}$ and $q \in \mathbb{N}$.  Essentially, if a vector $\mathbf{x}$ is contained in $W(i, j, \psi)$ then  it can be approximated by rational points at a rate prescribed by the approximating function~$\psi$.
The exponents $i$ and $j$ act as `weights', perturbing the speed of approximation across the two components of $\mathbf{x}$.
Evidently, the set $W(i, j, \psi)$ is only interesting if the function $\psi$ takes small values for large~$q$. It is therefore reasonable to assume, as we will here,  
that $\psi(q) \rightarrow 0$ as $q \rightarrow \infty$.

In 1926, Khintchine \cite{Khi} proved a remarkable result describing the rate at which real vectors can `typically' be approximated by rational points in the case `$i=j=1/2$'. 
Khintchine's result was later generalised by Schmidt~\cite{Sch}, who proved that for any pair of real numbers $i, j$ satisfying (\ref{eqn:ijconditions}) and any approximating function $\psi$ we have
	\begin{equation*}
		\lambda_2 \left( W(i, j, \psi) \right) \quad = \quad
		\begin{cases}
			0, & \displaystyle\sum_{r=1}^{\infty} \psi(r) \, < \, \infty. \\
  		& \\
  		1, & \displaystyle\sum_{r=1}^{\infty} \psi(r) \, = \, \infty\, \text{ and } \psi \text{ is monotonic}.
  	\end{cases}
	\end{equation*}
Here and throughout, $\lambda_n$ denotes standard $n$-dimensional Lebesgue measure. 
It was subsequently shown that the monotonicity restriction imposed on $\psi$ in the `divergent' part of the above statement can be relaxed. For example,  Theorem~$3.8$ of Harman's book \cite{Harman} suffices. This provides a stark contrast to the present situation in the classical one dimensional setting, as we discuss in~\S\ref{sec:DuffinAndSchaeffer}.


The system of inequalities given by (\ref{eqn:inequalities}) can be rephrased in more compact notation to read
\begin{equation}
	\label{eqn:inequalities2}
	\max \left\{ \norm{qx_1}^{1/i}, \, \norm{qx_2}^{1/j} \right\} \, \leq \, \psi(q),  
\end{equation}
where $\norm{\, . \, }$ denotes the distance to the nearest integer. When $i=j=1/2$ the left hand side of (\ref{eqn:inequalities2}) reduces to the familiar supremum norm.
 In this case one can consider the following `multiplicative' variant of the set $W(1/2, 1/2, \psi)$ conceived by 
 replacing the supremum norm with the geometric mean. In particular, we may define
\begin{equation*}
	M(\psi):= \,  \left\{\mathbf{x} \in [0,1)^2: \, \norm{qx_1}\norm{qx_2} \,
	\leq  \, \psi(q)
	\, \, \text{ for inf.\ many } q \in \mathbb{N} \right\}.
\end{equation*}
A criterion for the Lebesgue measure of the set $M(\psi)$,  analogues to Khintchine result, was found by Gallagher~\cite{Gal} 
in $1962$. For any approximating function $\psi$ we have that
	\begin{equation*}
		\lambda_2 \left( M(\psi) \right) \quad = \quad
		\begin{cases}
			0, & \displaystyle\sum_{r=1}^{\infty}  \psi(r)\log(r) \, < \, \infty. \\
  		& \\
  		1, & \displaystyle\sum_{r=1}^{\infty} \psi(r)\log(r) \, = \, \infty\, \text{ and } \psi 
  		\text{ is monotonic}.
  	\end{cases}
	\end{equation*}
\noindent
It is an open question as to whether the monotonicity assumption can be safely removed from this statement (for recent progress, see \cite{BHV1}). We remark that the concept of (consciously) adding weights to the components of approximation is not prevalent in the study of problems in the multiplicative setting and we avoid its inclusion here for the sake of clarity.

 In 2004, de Mathan \& Teuli\'{e} \cite{dMT} introduced a closely related `mixed' multiplicative setup realised by retaining the condition that $\norm{qx_1}$ is small but replacing the condition on $\norm{qx_2}$ with a condition of divisibility. 
To elaborate we require some notation. A sequence $\mathcal{D}=\left\{n_k\right\}_{k=0}^\infty$ of positive integers is said to be a \textit{pseudo-absolute value sequence} if it is 
strictly increasing with $n_0 = 1$ and $n_k | n_{k+1}$ for all $k$. In this case $\mathcal{D}$ will often be referred to as a \textit{$\mathcal{D}$-adic sequence}. We say a $\mathcal{D}$-adic sequence sequence has \textit{bounded ratios} if the quotients $n_{k+1}/n_k$ do not exceed some universal constant. 

Given a $\mathcal{D}$-adic sequence we define the \textit{$\mathcal{D}$-adic pseudo-absolute value} $\infabs{ \, . \, }_\mathcal{D}: \mathbb{N} \rightarrow \left\{ 1/n_k: \, k\in \mathbb{N} \right\}$ by
\[
	\infabs{q}_\mathcal{D} := 1/n_{\omega_\mathcal{D}(q)} = \inf\{ 1/n_m : q \in  n_m\mathbb{Z} \}.
\] 
In other words, the $\mathcal{D}$-adic value assigns to each natural number $q$ the reciprocal of the largest member of $\mathcal{D}$ dividing $q$.
When $\left\{n_{k+1}/n_k\right\}_{k=0}^{\infty}$ is the constant sequence equal to a prime number $p$, the pseudo absolute value  $|\, \cdot \, |_\mathcal{D}$ is 
the usual $p$-adic absolute value $|\, \cdot \, |_{p}$.

Within the setup of de Mathan \& Teuli\'e one may consider a `mixed' version of the set $M(\psi)$. Namely, we define
\begin{equation*}
	M_{\mathcal{D}}(\psi):= \,  \left\{x \in [0,1): \, \infabs{q}_{\mathcal{D}}\norm{qx} \,	\leq  \, \psi(q)
	\, \, \text{ for inf.\ many } q \in \mathbb{N} \right\}.
\end{equation*}
Recently, in \cite{HH}, an analogue of Gallagher's statement was established concerning the set $M_{\mathcal{D}}(\psi)$. For any approximating function $\psi$ and any $\mathcal{D}$-adic sequence with bounded ratios we have
	\begin{equation*}
		\lambda_1 \left( M_\mathcal{D}(\psi) \right) \quad = \quad
		\begin{cases}
			0, & \displaystyle\sum_{r=1}^{\infty}  \psi(r)\log(r) \, < \, \infty. \\
  		& \\
  		1, & \displaystyle\sum_{r=1}^{\infty} \psi(r)\log(r) \, = \, \infty\, \text{ and } \psi 
  		\text{ is monotonic}.
  	\end{cases}
	\end{equation*}
Again, it is currently unknown whether the monotonicity assumption is necessary.
Somewhat surprisingly, a mixed analogue of the set $W(i, j, \psi)$ has not yet 
been studied. The intentions of the present paper are to do exactly that. In particular, a 
metric theorem is established concerning the one-dimensional set
\begin{equation*}
	W_{\mathcal{D}}(i, j, \psi):= \,  \left\{ x \in [0,1): \, \max \left\{\infabs{q}_{\mathcal{D}}^{1/i}, \norm{qx}^{1/j} \right\} \leq  \psi(q)
	\, \, \text{ for inf.\ many } q \in \mathbb{N} \right\}.
\end{equation*}
As we have seen, for each monotonic approximating function $\psi$ the Lebesgue measures of the multiplicative sets $M(\psi)$ and $M_{\mathcal{D}}(\psi)$ 
depend on the asymptotic behaviour of the same sum (assuming that $\mathcal{D}$ has bounded ratios). We show that the sets $W(i, j, \psi)$ and 
$W_{\mathcal{D}}(i, j, \psi)$ enjoy a similar property.

For the case when $\psi(q)=1/q$ and $\mathcal{D}$ has bounded ratios the `badly approximable' complement of the 
set $W_{\mathcal{D}}(i, j, \psi)$ was examined in \cite{BLV} (see also~\cite{Li}). 
This seems to constitute all previous knowledge of mixed problems in Diophantine approximation outside of the multiplicative setting. On the other hand, when $\psi(q)=1/q$ the sets $M(\psi)$ and $M_{\mathcal{D}}(\psi)$ are strongly related to the famous Littlewood Conjecture and its mixed counterpart, both of which have received much recent attention.



\section{Statement of Results}
\label{sec:StatementOfResults}

For notational purposes, let $\mathcal{A}_\psi:=\mathcal{A}(\mathcal{D}, \psi, i):= \, \left\{r \in \mathbb{N}: \, \infabs{r}_{\mathcal{D}} < \psi^i(r) \right\}$. 
%
%
The main result of this paper is the following analogue of the statement of Schmidt described above.

\begin{thm}
	\label{thm:divergence}
	For any pair of reals $i$, $j$ satisfying (\ref{eqn:ijconditions}), any decreasing approximating function $\psi$ and any $\mathcal{D}$-adic sequence with bounded ratios we have
	\begin{equation*}
		\lambda_1 \left( W_{\mathcal{D}}(i, j, \psi) \right) \quad = \quad
		\begin{cases}
			0, & \displaystyle\sum_{r\in \mathbb{N}} \psi(r) \, < \, \infty. \\
  		& \\
  		1, & \displaystyle\sum_{r\in \mathbb{N}} \psi(r) \, = \, \infty.
  	\end{cases}
	\end{equation*}
\end{thm}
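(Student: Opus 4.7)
My plan is to treat the two halves separately, viewing $W_\mathcal{D}(i,j,\psi) = \limsup_{q \in \mathcal{A}_\psi} A_q$ with $A_q := \{x \in [0,1) : \|qx\| \leq \psi^j(q)\}$, and partitioning $\mathcal{A}_\psi$ according to $k(q) := \max\{k : n_k \mid q\}$. Set $S_k := \{q \in \mathcal{A}_\psi : k(q) = k\}$. Because $\psi$ is decreasing, membership of $q$ in $S_k$ is equivalent to the joint conditions $n_k \mid q$, $n_{k+1} \nmid q$, and $q \leq Q_k := \psi^{-1}(n_k^{-1/i})$.

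For the convergence direction I would apply the Borel--Cantelli lemma, using $\lambda_1(A_q) \leq 2\psi^j(q)$; the task thus reduces to showing $\sum_{q \in \mathcal{A}_\psi} \psi^j(q) < \infty$. Parametrizing each $q \in S_k$ as $q = n_k m$ with $m \leq M_k := \lfloor Q_k / n_k\rfloor$ and exploiting monotonicity of $\psi^j$ yields
\[
\sum_{q \in S_k} \psi^j(q) \;\leq\; \sum_{m=1}^{M_k}\psi^j(n_k m) \;\leq\; \frac{1}{n_k}\sum_{r=1}^{Q_k}\psi^j(r).
\]
I would then swap the summations over $k$ and $r$ via the equivalence $Q_k \geq r \iff n_k \geq \psi(r)^{-i}$. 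Since $n_k \mid n_{k+1}$ and strict monotonicity force $n_{k+1} \geq 2n_k$, which together with the bounded-ratios hypothesis delivers $\sum_{k : n_k \geq N} 1/n_k \ll 1/N$, the total sum is controlled by $\ll \sum_r \psi^j(r)\cdot\psi^i(r) = \sum_r \psi(r) < \infty$, as required.

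For the divergence direction I would first invoke a Cassels-type zero-one law (of the kind quoted in the preamble) to reduce the question to proving $\lambda_1(W_\mathcal{D}(i,j,\psi)) > 0$, and then establish this positivity via the ubiquity framework of Beresnevich--Dickinson--Velani advertised in the abstract. Treating $\{p/q : q \in \mathcal{A}_\psi,\, 0 \leq p < q\}$ as the resonant set, the central task is to verify a local ubiquity statement: there exist a function $\rho(N) \to 0$ and a constant $c > 0$ such that, for every interval $I \subseteq [0,1]$ and all large $N$, a proportion at least $c|I|$ of $I$ is covered by the $\rho(N)$-neighbourhoods of admissible rationals with denominator in $\mathcal{A}_\psi \cap (N/2,N]$. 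Divergence of $\sum_r \psi(r)$ then converts, through the standard ubiquity machinery, into the desired full-measure conclusion.

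The hardest step will be this ubiquity verification. Because $\mathcal{A}_\psi$ has density only of order $N^{-i}$ in $[1,N]$, I would use the bounded-ratios hypothesis to locate, for each scale $N$, an index $k$ with $n_k$ of size roughly $\psi(N)^{-i}$, thereby ensuring both admissibility and a dense supply of multiples $q = n_k m \leq N$. A Farey-style count then has to show that the associated rationals $p/(n_k m)$ distribute evenly at scale $\rho(N)$. The delicate interplay between the sparsity enforced by the $\mathcal{D}$-adic condition and the density demanded by ubiquity is where the bounded-ratios assumption plays its essential role, complementing its more routine appearance in the convergence half.
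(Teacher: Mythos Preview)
Your proposal is essentially correct and in line with the paper, but the two halves are organised differently enough that a brief comparison is worthwhile.

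\textbf{Convergence.} The paper partitions $\mathcal{A}_\psi$ into the blocks $(n_k,n_{k+1}]$, bounds $\#\{q\in(n_k,n_{k+1}]\cap\mathcal{A}_\psi\}\leq (n_{k+1}-n_k)\psi^i(n_k)$, and then applies Schl\"omilch's condensation test (which is where bounded ratios enter). Your decomposition by the exact $\mathcal{D}$-adic valuation $k(q)$ and the subsequent swap of summations is a genuinely different bookkeeping, and in fact slightly cleaner: the bound $\sum_{k:\,n_k\geq N}1/n_k\leq 2/N$ follows from $n_{k+1}\geq 2n_k$ alone, so your convergence argument does \emph{not} need the bounded-ratios hypothesis and avoids Schl\"omilch entirely. (A small caveat: $\psi$ is only assumed non-increasing, so $Q_k$ should be defined as $\sup\{q:\psi(q)>n_k^{-1/i}\}$ rather than via an inverse.)

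\textbf{Divergence.} Your plan matches the paper's: ubiquity with resonant points $p/q$, $q\in\mathcal{A}_\psi$. Two remarks. First, the Cassels zero--one law is unnecessary: the Beresnevich--Dickinson--Velani theorem (Theorem~BDV1 in the paper) delivers full $\mu$-measure directly once local ubiquity is established, so you may skip the reduction to positive measure. Second, the paper runs ubiquity along $u_k=n_{ck}$ (for a fixed integer $c\geq 2$) rather than along a dyadic sequence, with ubiquitous function $\rho(q)=\gamma/(q^2\psi^i(q))$; the verification of local ubiquity is exactly your ``locate $n_k$ of size roughly $\psi(N)^{-i}$ and look at its multiples'' idea, formalised as a rescaled Dirichlet lemma (Proposition~5.2): apply classical Dirichlet with $N'=n_{ck}/n_{cm_k}$ and then multiply numerator and denominator by $n_{cm_k}$ to force the required divisibility. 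The subsequent covering estimate is a direct measure count on the complement rather than a Farey argument, and it is here (and only here) that bounded ratios are genuinely used, to control $n_{cm_k}\leq M^c\psi^{-i}(n_{ck})$ and $n_{ck}\leq M^c q$.
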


We remark that under the assumption that $\psi$ is monotonic it is easy to show that the sums $\sum_{r\in \mathbb{N}} \psi(r)$ and $\sum_{r\in \mathcal{A}_\psi} \psi^j(r)$ are asymptotically equivalent. As a consequence, one is free to replace the former sum with the latter in the statement of Theorem \ref{thm:divergence}. This feature is symptomatic of the fact that the problem at hand is essentially one of Diophantine approximation with restricted denominator. Indeed, we may write
\begin{equation*}
	W_{\mathcal{D}}(i, j, \psi)=\left\{x \in [0, 1): \, \norm{qx} < \psi^j(q) \quad	\text{ for inf. many } q \in \mathcal{A}_\psi \right\}.
\end{equation*}

When $\psi$ is not assumed monotonic the two sums described above are not necessarily asymptotically equivalent (see \S\ref{sec:TheMixedSetting}). Moreover, using standard techniques it quickly follows that 
\begin{equation}
\label{eqn:convergence}
		\lambda_1 \left( W_{\mathcal{D}}(i, j, \psi) \right) = 0  \quad \text{ if } \quad \displaystyle\sum_{r\in \mathcal{A}_\psi} \psi^j(r) \, < \, \infty. 
	\end{equation}
However, as demonstrated in  \S\ref{sec:TheMixedSetting}, the analogous statement does not hold in general for the sum $\sum_{r\in \mathbb{N}} \psi(r)$. In this sense the monotonicity assumption is necessary in the 'convergence' part of Theorem~\ref{thm:divergence} (in its stated form).
The point is that $\sum_{r\in \mathcal{A}_\psi} \psi^j(r)$ should be considered the `genuine' critical sum relating to the measure of the set $W_{\mathcal{D}}(i, j, \psi)$. In fact, it would not be unfair to view it as merely a coincidence that there is asymptotic equivalence between the two sums in question when monotonicity is enforced. That said, to bring the similarity with Schmidt's classical result to the forefront we choose to present our statement in the current form, despite the possibly artificial nature of doing so. Regardless of which sum one chooses we  prove in \S\ref{sec:ConstructionOfCounterexample} that the monotonicity assumption in the `divergent' part of Theorem~\ref{thm:divergence} is absolutely necessary. 


It is worth emphasising that the two degenerate cases `$i=0$' and `$j=0$' are not considered in this paper. 
On employing the convention that $x^{1/y}=0$ when $y=0$ for all real $x$,
it is easily verified that in the former case Theorem \ref{thm:divergence} reduces to the classical one-dimensional result of Khintchine (see \S\ref{sec:DuffinAndSchaeffer}), whilst in the latter case the measure of the set $W_{\mathcal{D}}(1, 0, \psi)$ 
trivially fulfils a `zero-one' law. Indeed,
\[
	 W_{\mathcal{D}}(1, 0, \psi) \quad = \quad
	\begin{cases}
		[0,1), &  \psi(q)>\infabs{q}_\mathcal{D} \text{ for infinitely many } q \in \mathbb{N}. \\
		\emptyset, &  \text{otherwise}.
  \end{cases}
\]

Finally, we remark that whilst it would be desirable to generalise Theorem~\ref{thm:divergence} to the case of pseudo-absolute value sequences whose ratios are not bounded, to do so would require more than trivial improvements over the techniques we present. 

Theorem \ref{thm:divergence} is proven as a consequence of a more general Hausdorff measure result. Throughout, $\mathcal{H}^s$ denotes standard $s$-dimensional Hausdorff measure and `$\dim$' represents Hausdorff dimension. Recall that when $s=1$ Hausdorff measure is comparable to 
one-dimensional Lebesgue measure.
\begin{thm}
	\label{thm:Hdivergence}
	Fix any pair of reals $i$, $j$ satisfying (\ref{eqn:ijconditions}), any $\mathcal{D}$-adic sequence with bounded ratios and any real $s \in (i,1]$.  Then, for any approximating function $\psi$ for which the related function $f_{\psi}:\mathbb{N} \rightarrow \mathbb{R}_{\geq0}: r \longmapsto r^{1-s}\psi^{i+js}(r)$ is decreasing we have 
	\begin{equation*}
		\mathcal{H}^s \left( W_{\mathcal{D}}(i, j, \psi) \right) \, = \,
		\begin{cases}
			0, & \displaystyle\sum_{r\in \mathbb{N}} f_{\psi}(r) \, < \, \infty. \\
  		& \\
  		\mathcal{H}^s([0,1)), & \displaystyle\sum_{r\in \mathbb{N}} f_{\psi}(r) \, = \infty
  		\, \text{ and } \psi \text{ is monotonic}.
  	\end{cases}
\end{equation*}
\end{thm}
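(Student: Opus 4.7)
My starting point is the alternative description
\begin{equation*}
W_{\mathcal{D}}(i,j,\psi) \;=\; \bigl\{x \in [0,1) : \norm{qx} \leq \psi^j(q) \text{ for inf.\ many } q \in \mathcal{A}_\psi\bigr\},
\end{equation*}
which exhibits the set as the $\limsup$ set $\limsup_{q \in \mathcal{A}_\psi} E_q$, where $E_q := \bigcup_{0 \leq p \leq q} B(p/q,\psi^j(q)/q)$ is a union of $\asymp q$ intervals of length $2\psi^j(q)/q$. This decomposition invites the Hausdorff-Cantelli lemma for the convergence half and the ubiquity framework for the divergence half.

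\emph{Convergence.} Applying Hausdorff-Cantelli to the natural cover of $\limsup_{q \in \mathcal{A}_\psi} E_q$ yields
\begin{equation*}
\mathcal{H}^s\bigl(W_{\mathcal{D}}(i,j,\psi)\bigr) \;\ll\; \sum_{q \in \mathcal{A}_\psi} q^{1-s}\psi^{js}(q),
\end{equation*}
so it suffices to dominate the right-hand side by a constant multiple of $\sum_r f_\psi(r)$. Here the bounded-ratios hypothesis is crucial: partitioning $\mathcal{A}_\psi$ according to the $\mathcal{D}$-adic valuation $\omega_\mathcal{D}(q)$, one observes that the integers $q \in \mathcal{A}_\psi \cap [2^k,2^{k+1})$ are precisely the multiples of $n_{m_0(k)}$ in the block for a suitable $m_0(k)$ with $n_{m_0(k)} \asymp \psi^{-i}$, so their count is of order $2^k\psi^i$. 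A block-by-block comparison, combined with the assumption that $f_\psi$ is decreasing, then closes the estimate.

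\emph{Divergence.} For the divergence half I would appeal to the general ubiquity framework of Beresnevich-Dickinson-Velani, applied to the family of resonant points $\bigl\{p/q : q \in \mathcal{A}_\psi,\,0 \leq p \leq q\bigr\}$ with approximating radii $\psi^j(q)/q$. The key step is to exhibit a ubiquity function $\rho$ for which, for every sufficiently large $Q$, the union
\begin{equation*}
\bigcup_{\substack{q \in \mathcal{A}_\psi \\ q \leq Q}} \bigcup_{p} B\bigl(p/q,\rho(Q)\bigr)
\end{equation*}
covers $[0,1)$ up to a set of Lebesgue measure bounded away from $1$. Once this is in place, the BDV machinery will deliver $\mathcal{H}^s(W_{\mathcal{D}}(i,j,\psi)) = \mathcal{H}^s([0,1))$ whenever $\sum_r f_\psi(r)$ diverges, the restriction $s \in (i,1]$ emerging naturally as the regime in which the BDV hypotheses linking $\rho$ to the approximating function can be satisfied.

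\emph{Main obstacle.} The real substance of the argument lies in the verification of ubiquity. The strategy I have in mind is Dirichlet-theoretic: for a given $Q$, select $m = m(Q)$ so that $n_m$ just exceeds $\psi^{-i}(Q)$, ensuring that every multiple of $n_m$ below $Q$ automatically lies in $\mathcal{A}_\psi$ and, by bounded ratios, these multiples number $\asymp Q\psi^i(Q)$. A pigeonhole argument over $\norm{qx}$ applied to this enriched family of denominators should furnish, for each $x \in [0,1)$, a fraction $p/q$ of the required type within distance $\rho(Q)$. The delicate point is calibrating $\rho(Q)$ to be small enough to be compatible with the divergence of $\sum_r f_\psi(r)$, yet large enough for the covering step to close; this balancing is the technical heart of the argument and is what distinguishes the present mixed setting from both the classical Khintchine-Jarn\'ik theorem and the purely multiplicative setups treated in the literature.
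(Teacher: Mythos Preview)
Your plan is correct and matches the paper's proof essentially step for step: the convergence half is the Hausdorff--Cantelli argument with a block count of $\mathcal{A}_\psi$ (the paper uses $\mathcal{D}$-adic blocks $(n_k,n_{k+1}]$ and Schl\"omilch condensation rather than your dyadic blocks, but this is immaterial given bounded ratios), and the divergence half is the BDV ubiquity framework with resonant points $p/q$, $q\in\mathcal{A}_\psi$, ubiquity function $\rho(Q)\asymp (Q^2\psi^i(Q))^{-1}$, and the covering property established via your Dirichlet-with-forced-divisibility pigeonhole (which the paper records as Proposition~\ref{prop:Dirichlet}). The one technicality you have not flagged is that \emph{local} ubiquity requires the denominators $q$ to lie in a dyadic-type window rather than merely satisfy $q\le Q$, so one must also show that the small-$q$ contribution is negligible; the paper handles this by taking the upper sequence $u_k=n_{ck}$ with $c\ge 2$ and bounding the measure of the leftover set $\mathcal{J}$ directly.
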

We prove this result via the notion of ubiquitous systems, a fundamental tool for establishing measure theoretic statements in Diophantine approximation. A tailored account of the ubiquity setup is presented in \S\ref{sec:Theubiquitysetup}.

We do not claim the conditions imposed on the function $\psi$ in Theorem~\ref{thm:Hdivergence} are optimal. In fact, we suspect that the monotonicity assumption imposed on $f_{\psi}$ may be unnecessary. 
Despite this constraint, we are able to deduce the following generalisation of a classical theorem of Jarn\'ik \cite{Jar} and Besicovich~\cite{Bes}. Their 
fundamental result corresponds to the case `$i=0, j=1$' in our setup.
\begin{cor}
	\label{cor:Hdimdivergence}
		Choose any pair of reals $i$, $j$ satisfying (\ref{eqn:ijconditions}), 
		any $\mathcal{D}$-adic sequence with bounded ratios and any 
		decreasing approximating function $\psi$. Assume there exists a real number 
		$\tau$ such that $$\tau \, = \, \lim_{r \rightarrow \infty}\frac{-\log \psi(r)}{\log r} \, < \,  \frac{1}{i}.$$ 
		Then, 
	\begin{equation*}
		\dim\left( W_{\mathcal{D}}(i, j, \psi) \right)  \, = \, 
		\min \left\{1, \, \frac{2 - i\tau}{1 + j\tau} \right\}.
	\end{equation*}
\end{cor}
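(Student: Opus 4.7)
The plan is to deduce Corollary~\ref{cor:Hdimdivergence} directly from Theorem~\ref{thm:Hdivergence} by sandwiching the given $\psi$ between two pure power functions. Write $s_0 := (2-i\tau)/(1+j\tau)$, so that the statement reduces to showing $\dim W_{\mathcal{D}}(i,j,\psi) = \min\{1,s_0\}$. A brief algebraic manipulation using $i+j=1$ reveals that the hypothesis $\tau < 1/i$ is equivalent to the strict inequality $s_0 > i$. This is precisely what keeps every subsequent application of Theorem~\ref{thm:Hdivergence} inside the permissible range $s \in (i,1]$ and is therefore the structural reason behind the hypothesis on $\tau$.

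For the upper bound, the definition of $\tau$ guarantees that, for every $\epsilon>0$, $\psi(r) \leq \tilde\psi_\epsilon(r) := r^{-(\tau-\epsilon)}$ for all sufficiently large $r$. Since membership in $W_{\mathcal{D}}(i,j,\cdot)$ is a tail property of $\psi$, this yields $W_{\mathcal{D}}(i,j,\psi) \subseteq W_{\mathcal{D}}(i,j,\tilde\psi_\epsilon)$. Fixing any $s \in (\max\{i,s_0\}, 1]$, the associated function $f_{\tilde\psi_\epsilon}(r) = r^{1-s-(\tau-\epsilon)(i+js)}$ is a monotone power function whose exponent tends to $1-s-\tau(i+js)<-1$ as $\epsilon\to 0$. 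Choosing $\epsilon$ small enough to force convergence, the convergence half of Theorem~\ref{thm:Hdivergence} yields $\mathcal{H}^s(W_{\mathcal{D}}(i,j,\psi))=0$; letting $s\searrow s_0$ gives $\dim W_{\mathcal{D}}(i,j,\psi) \leq s_0$. When $s_0 \geq 1$ the trivial estimate $\dim \leq 1$ handles the upper bound.

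For the lower bound I would argue symmetrically with $\hat\psi_\epsilon(r) := r^{-(\tau+\epsilon)}$. The definition of $\tau$ gives $\psi(r) \geq \hat\psi_\epsilon(r)$ for $r$ large, and hence $W_{\mathcal{D}}(i,j,\hat\psi_\epsilon) \subseteq W_{\mathcal{D}}(i,j,\psi)$. Given any $s$ with $i < s < \min\{1,s_0\}$, the function $\hat\psi_\epsilon$ is strictly decreasing and $f_{\hat\psi_\epsilon}(r) = r^{1-s-(\tau+\epsilon)(i+js)}$ is again a pure power; a continuity argument picks $\epsilon$ for which the exponent lies in $[-1,0)$, so the sum diverges and $f_{\hat\psi_\epsilon}$ is decreasing. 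The divergence half of Theorem~\ref{thm:Hdivergence} then delivers $\mathcal{H}^s(W_{\mathcal{D}}(i,j,\psi)) \geq \mathcal{H}^s(W_{\mathcal{D}}(i,j,\hat\psi_\epsilon)) = \mathcal{H}^s([0,1)) = \infty$, yielding $\dim \geq s$; letting $s$ tend up to $\min\{1,s_0\}$ completes the argument.

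I do not foresee a serious obstacle: the whole deduction is a routine sandwiching of $\psi$ by the power functions $\hat\psi_\epsilon$ and $\tilde\psi_\epsilon$, for which both $f$ and $\psi$ are automatically power-monotone and the relevant sum is a direct $p$-series whose behaviour is transparent. The only issue requiring care is the bookkeeping that keeps $s \in (i,1]$ throughout---which, as noted, is secured by the hypothesis $\tau < 1/i$---together with the trivial separation of the case $s_0 \geq 1$, where $\dim \leq 1$ is free and the lower bound is obtained by letting $s \nearrow 1$ rather than $s \nearrow s_0$.
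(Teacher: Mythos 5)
Your argument is correct, and since the paper leaves the deduction of Corollary~\ref{cor:Hdimdivergence} from Theorem~\ref{thm:Hdivergence} implicit, the sandwiching by pure power functions $\tilde\psi_\epsilon(r)=r^{-(\tau-\epsilon)}$ and $\hat\psi_\epsilon(r)=r^{-(\tau+\epsilon)}$ is exactly the standard route one is expected to take. You correctly identify that $\tau<1/i$ is equivalent to $s_0>i$ (using $1+j=2-i$), which is what keeps $s$ in the admissible window $(i,1]$, and your case split at $s_0\geq 1$ is handled cleanly by the trivial bound $\dim\leq 1$.

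One small point worth making explicit, since it is where a careless reader could trip: for the lower bound, the exponent of $f_{\hat\psi_\epsilon}$ at $\epsilon=0$, namely $E_0=1-s-\tau(i+js)$, satisfies $E_0>-1$ precisely when $s<s_0$, but $E_0$ need not already be negative. Your continuity argument does handle this---as $\epsilon$ increases from $0$ the exponent strictly decreases to $-\infty$, so one can always land in $[-1,0)$---but it is worth noting that when $E_0\geq 0$ one must push $\epsilon$ well away from zero rather than merely "slightly positive", and the strict inequality $\epsilon>0$ is also needed so that the definition of the limit $\tau$ genuinely yields $\psi(r)\geq r^{-(\tau+\epsilon)}$ for large $r$ (it gives nothing at $\epsilon=0$). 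With that caveat spelled out, the proof is complete and matches what the paper intends.
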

It should be observed that if $\psi(r)\leq r^{-1/i}$ for all sufficiently large $r$ then the set $W_{\mathcal{D}}(i, j, \psi)$ is empty.


\section{Removing monotonicity}
\label{sec:DS}

\subsection{The work of Duffin and Schaeffer}
\label{sec:DuffinAndSchaeffer}

For any approximating function $\psi$ let
\[
	W(\psi):= \left\{x \in [0,1): \, \norm{qx} < \psi(q) \text{ for infinitely many } q \in \mathbb{N} 
	\right\}
\]
denote the standard set of $\psi$-approximable numbers. 
The one-dimensional version of Khintchine's theorem states that the Lebesgue measure of $W(\psi)$ 
is zero if the sum $\sum_{r=1}^{\infty} \psi(r)$ converges or one if the sum  $\sum_{r=1}^{\infty} \psi(r)$ diverges and $\psi$ is decreasing.
In their seminal paper \cite{DS}, Duffin \& Schaeffer produced a counterexample showing that the monotonicity assumption in the `divergent' part  is necessary. In particular, 
they constructed a non-monotonic approximating function $\psi$ for which 
$\lambda_1(W(\psi))=0$ but the sum $\sum_{r =1}^{\infty} \psi(r)$ diverges. 

In an attempt to provide an alternative statement to that of Khintchine, free from any restrictions on the choice of approximating function, Duffin \& Schaeffer considered the following variant of $W(\psi)$.
For any approximating function $\psi$ let $W'(\psi)$ denote the set of real numbers $x \in [0,1)$ for which
\[
	\infabs{qx-p} < \psi(q) \quad \quad \text{ for infinitely many } p \in \mathbb{Z} \text{ and } q \in \mathbb{N} \text{ with } (p,q)=1. 
\]
This set differs from $W(\psi)$ only by the coprimality restriction on $p$ and $q$. The restriction ensures that the rational approximations $p/q$ to $x$ are in reduced form.  In their paper, Duffin \& Schaeffer were able to establish partial results concerning the measure of the set~$W'(\psi)$ but fell short of proving their now famous conjecture. In what follows $\varphi$ denotes Euler's totient function.

\begin{thmDS}
	For any approximating function $\psi$ we have
	\[
		\lambda_1(W'(\psi)) = 1  \quad \text{ if } \quad \sum_{r=1}^{\infty} \frac{\varphi(r)}{r}\, \psi(r) = 
		\infty.
	\]
\end{thmDS}

It is clear that $W'(\psi) \subset W(\psi)$, which immediately
implies the complementary statement 
	\[
		\lambda_1(W'(\psi)) = 0  \quad \text{ if } \quad \sum_{r=1}^{\infty} \frac{\varphi(r)}{r}\, \psi(r) < 
		\infty
	\]
holds for every approximating function $\psi$. The Duffin-Schaeffer Conjecture 
represents one of the most profound unsolved problems in metric Diophantine approximation. For a thorough account, including recent progress made concerning the conjecture, see \S$2$ of \cite{Harman} and \cite{BHHV}.

\subsection{The mixed setting}
\label{sec:TheMixedSetting}

One might expect that similar properties to those suggested by Duffin \& Schaeffer hold within the mixed simultaneous setting. In this section we 
discuss the necessity of the monotonicity assumption imposed in Theorem~\ref{thm:divergence}. Firstly, we present a simple example of a function demonstrating that the assumption is necessary in the convergence part of the result.

For any real pair $i, j$ and any pseudo-absolute value sequence $\mathcal{D}=\left\{n_k\right\}_{k=0}^\infty$ let 
\begin{equation*}
		\psi_0(q):\, = \,	
		\begin{cases}
			k^{-(1+\delta)}, & q = n_k, \\
  		& \\
  		0, & \text{otherwise},
  	\end{cases}
\end{equation*}
where $\delta:  =  (1/\max(i, j) - 1)/2 > 0$. Then, for all $k \in \mathbb{N}$ we have $n_k \geq k > k^{i(1+\delta)}$ implying that 
$\infabs{n_k}_\mathcal{D} = 1/n_k < \psi_0^i(n_k)$ and so $\mathcal{A}_{\psi_0}=\mathcal{D}$.  Moreover, it is easy to see that
$$ \displaystyle\sum_{r=1}^{\infty} \psi_0(r) \, < \, \infty \quad \text{ but } \quad W_{\mathcal{D}}(i, j, \psi_0) = [0, 1)$$
as required. Of course, statement (\ref{eqn:convergence}) provides the relevant monotonicity-free criterion giving measure zero and is certainly not 
contravened here as $$\displaystyle\sum_{r\in \mathcal{A}_{\psi_0}} \psi_0^j(r) \, = \, \sum_{r\in \mathcal{D}} \psi_0^j(r) \,= \, \sum_{k=1}^{\infty}k^{-j(1+\delta)} \, > \,  \sum_{k=1}^{\infty}k^{-1},$$
which diverges.

It is easier still to show that the monotonicity assumption is necessary in the divergence part of Theorem~\ref{thm:divergence}.
For example, one may take
\begin{equation*}
		\psi_1(q): \, = \,	
		\begin{cases}
			1/2, & (n_k, q)=1 \, \text{ for all } k \in \mathbb{N}, \\
  		& \\
  		0, & \text{otherwise},
  	\end{cases}
\end{equation*}
in which case the set $W_{\mathcal{D}}(i, j, \psi_1)$ is empty but the sum $\sum_{r =1}^{\infty}\psi_1(r)$ diverges.

The simple nature of the examples $\psi_0$ and $\psi_1$ is indicative of the fact that the volume sum in question is not the morally correct choice. As discussed earlier, a more interesting problem is removing monotonicity from Theorem~\ref{thm:divergence} when the critical sum is taken to be $\sum_{r \in \mathcal{A}_{\psi}}\psi^j(r)$. Here also, one can provide a counterexample in the divergence case, albeit with a degree more of ingenuity. Evidently the convergence case is completely covered by statement (\ref{eqn:convergence}). 
\begin{thm}
	\label{thm:DSCounter}
	For any pair of reals $i$, $j$ satisfying (\ref{eqn:ijconditions}) and any $\mathcal{D}$-adic sequence 
	there exists an approximating function $\Psi:\mathbb{N}\rightarrow\mathbb{R}_{\geq0}$ for 
	which
	\[
		\lambda_1(W_{\mathcal{D}}(i, j, \Psi)) = 0  \quad \text{ but } \quad 
		\sum_{r \in \mathcal{A}_\Psi} \Psi^j(r) = \infty.
	\]
\end{thm}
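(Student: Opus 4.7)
The plan is to construct $\Psi$ blockwise, with each block supported on those multiples of some $n_{k_\ell}$ that also divide a highly composite ``super-denominator''. I would pick a rapidly increasing sequence $\{k_\ell\}_{\ell \geq 1}$ of positive integers and a null sequence $\{\delta_\ell\}$ with $\sum_\ell \delta_\ell < \infty$ (say $\delta_\ell := \ell^{-2}$), let $M_\ell$ be a primorial satisfying $M_\ell \leq \delta_\ell n_{k_\ell}^{j/i}$, and set
$$S_\ell := \{n_{k_\ell} d : d \mid M_\ell\}, \qquad \Psi(r) := (\delta_\ell d/M_\ell)^{1/j} \text{ for } r = n_{k_\ell} d \in S_\ell,$$
with $\Psi(r) := 0$ for all other $r$. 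The subsequence $\{k_\ell\}$ is chosen large enough to ensure the $S_\ell$ are pairwise disjoint (for example, $n_{k_{\ell+1}} > n_{k_\ell}^{1 + j/i}$) and that $\log \log M_\ell \gtrsim \ell$; both requirements are feasible for any $\mathcal{D}$-adic sequence since $n_k \to \infty$.

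The bound $M_\ell \leq \delta_\ell n_{k_\ell}^{j/i}$ guarantees $\Psi(r)^i \geq n_{k_\ell}^{-1} \geq \infabs{r}_{\mathcal{D}}$ for every $r \in S_\ell$, placing $S_\ell$ inside $\mathcal{A}_{\Psi}$. The block contribution to the critical sum is
$$\sum_{r \in S_\ell} \Psi^j(r) = \sum_{d \mid M_\ell} \frac{\delta_\ell d}{M_\ell} = \delta_\ell \cdot \frac{\sigma(M_\ell)}{M_\ell},$$
where $\sigma$ denotes the sum-of-divisors function, and Mertens' theorem applied to the primorial $M_\ell$ yields $\sigma(M_\ell)/M_\ell \asymp \log \log M_\ell$. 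The requirement $\log\log M_\ell \gtrsim \ell$ therefore gives $\sum_\ell \delta_\ell \sigma(M_\ell)/M_\ell \gtrsim \sum_\ell \ell^{-1} = \infty$, as required.

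The hard part will be the measure control, since the naive union bound over $S_\ell$ matches the divergent sum exactly, so any improvement demands exploiting overlaps between the sets $\{x : \norm{rx} < \Psi^j(r)\}$. The crucial idea is a \emph{divisibility lift}: for each $r = n_{k_\ell} d \in S_\ell$ one has $n_{k_\ell} M_\ell = (M_\ell/d) \cdot r$, and therefore
$$\norm{n_{k_\ell} M_\ell x} \leq (M_\ell/d)\norm{rx} < (M_\ell/d) \cdot \frac{\delta_\ell d}{M_\ell} = \delta_\ell$$
whenever $\norm{rx} < \Psi^j(r)$. Writing $E_\ell := \bigcup_{r \in S_\ell}\{x \in [0,1) : \norm{rx} < \Psi^j(r)\}$, this yields $E_\ell \subseteq \{x \in [0,1) : \norm{n_{k_\ell} M_\ell x} < \delta_\ell\}$ and hence $\lambda_1(E_\ell) \leq 2\delta_\ell$. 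Since $W_{\mathcal{D}}(i,j,\Psi) \subseteq \limsup_{\ell} E_\ell$ and $\sum_\ell \lambda_1(E_\ell) \leq 2\sum_\ell \delta_\ell < \infty$, the first Borel-Cantelli lemma concludes $\lambda_1(W_{\mathcal{D}}(i,j,\Psi)) = 0$.
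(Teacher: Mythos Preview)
Your construction is correct and follows essentially the same Duffin--Schaeffer adaptation as the paper: both support $\Psi$ on the multiples of some $n_K$ that divide $n_K M$ with $M$ a product of primes, arrange $\Psi^j(q)/q$ to be constant on the support so that all the approximation sets collapse into the single set $\{x:\norm{n_K M\,x}<\delta\}$ of measure $\le 2\delta$, and exploit $\sigma(M)/M\to\infty$ to force divergence of the critical sum. The only cosmetic differences are that you take $M$ to be a primorial and build $\Psi$ directly, whereas the paper chooses large primes coprime to the $n_k$ and first isolates a single ``block'' lemma before concatenating; one small point to tidy is that membership in $\mathcal{A}_\Psi$ requires the \emph{strict} inequality $\infabs{r}_{\mathcal{D}}<\Psi^i(r)$, so you should take $M_\ell<\delta_\ell n_{k_\ell}^{j/i}$ strictly.
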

\noindent
Our counterexample is constructed via direct modification of the method of Duffin \& Shaeffer and for completion we include a full exposition in 
\S\ref{sec:ConstructionOfCounterexample}.


With regards to a mixed simultaneous analogue of the Duffin-Schaeffer Conjecture, one begins by imposing a coprimality condition on the rational approximations as before. Here, this equates to considering the set $W'_{\mathcal{D}}(i, j, \psi)$ of points $x \in [0,1)$ for which the conditions
\begin{equation*}
	\max \left\{\infabs{q}_{\mathcal{D}}^{1/i}, \infabs{qx-p}^{1/j} \right\} \leq  \psi(q), \quad 
	\quad \quad (p, q)=1,
\end{equation*}
are satisfied for infinitely many $p \in \mathbb{Z}$ and $q\in \mathbb{N}$. It should now be obvious that it would be naive to propose that we have
	\[
		\lambda_1(W'_{\mathcal{D}}(i, j, \psi)) = 1  \quad \text{ if } \quad 
		\sum_{r =1}^{\infty} \frac{\varphi(r)}{r}\, \psi(r) = \infty
	\]
for any pair of reals $i$, $j$ satisfying (\ref{eqn:ijconditions}), any approximating function $\psi$ and any $\mathcal{D}$-adic sequence with bounded ratios.
Indeed, it is not difficult to see this statement is false; the function $\psi_1$ above provides a counterexample. A more astute and natural proposal for a mixed Duffin-Schaeffer Conjecture is that
	\[
		\lambda_1(W'_{\mathcal{D}}(i, j, \psi)) = 1  \quad \text{ if } \quad 
		\sum_{r \in \mathcal{A}_\psi} \frac{\varphi(r)}{r}\, \psi^j(r) = \infty.
	\]
The function $\psi_1$ certainly does not contradict this statement as the set $\mathcal{A}_{\psi_1}$ 
is empty in this case. Along with its classical counterpart, a proof (or disproof) of this statement remains out of reach.


\subsection{Proof of Theorem \ref{thm:DSCounter}}
\label{sec:ConstructionOfCounterexample}

In the vein of Duffin \& Schaeffer we first show for any $R \geq 1$ and  $\epsilon>0$ that there exists an approximating function 
$\psi$ such that
\begin{equation*}
	\sum_{r \in \mathcal{A}_\psi} \psi^j(r) \, \, > \, \, 1, \quad \quad 
	\psi(r)=0 \quad \text{ when } \quad r \leq R,
\end{equation*}
but the set of $x \in (0,1)$ such that 
\begin{equation}
	\label{eqn:admissible}
	\norm{qx} \, < \, \psi^j(q) \quad \quad \text{ for some } q \in \mathcal{A}_\psi,
\end{equation}
has Lebesgue measure strictly less than $\epsilon$.

Let $\alpha$ be a positive number such that $0 <  \alpha < \epsilon/2$ and choose primes 
$p_1, p_2,  \ldots, p_s$ with $p_t > R$ $ (t=1, \ldots, s)$ for some natural number~$s$ to be specified later. 
Since $\mathcal{D}$ has bounded ratios we may choose the primes in such a way that $(p_t, n_k)=1$ for all $t$ and $k$.
Next, let 
\[
	K=K(s, \alpha):=\min\left\{k \in \mathbb{N}: n_k \geq (p_1\cdots p_s/\alpha)^{i/j}  \right\}
\] 
and set 
\[
	N:= n_K p_1\cdots p_s.
\] 
Finally, define
\begin{equation*}		
	\psi(q):  \quad = \quad
	\begin{cases}
		(q\alpha/N)^{1/j}, & n_K \,| \,q, \quad  q\,|\,N, \quad q\neq n_K. \\
		& \\
	0, & \text{otherwise}.
 	\end{cases}
\end{equation*}

We claim that $\psi$ satisfies the desired properties. Let $A_q \subset (0,1)$ denote the set consisting of the $q-1$ open intervals of length $2 \psi^j(q)/q$ with centres at the rationals $p/q$ ($p=1,\ldots, q-1$) and the open intervals $(0, \psi^j(q)/q)$ and $(1-\psi^j(q)/q, 1)$. For small enough $\epsilon$ these intervals are disjoint and so the 
Lebesgue measure of $A_q$ is given by
$2\psi^j(q)=2q\alpha/N$ if $1 \leq q \leq N$. Furthermore, since $\psi^j(q)/q = \alpha/N = \psi^j(N)/N$ we have
\[
	A_N= \bigcup_{\substack{q \, | \, N: \\ n_K \, | \, q \\ q \neq n_K}} A_q
\]
and for all $q$ in this union
\[
	\infabs{q}_{\mathcal{D}} \,\, \leq \,\, \frac{1}{n_K} \,\, \leq \,\, \left(\frac{\alpha}{p_1\cdots p_s}\right)^{i/j} \,\, 
	= \, \, \left(\frac{n_K \alpha }{N}\right)^{i/j} < \,\, \left(\frac{q\alpha}{N}\right)^{i/j} \,\, = \,\, \psi^i(q);
\]
i.e., $q \in \mathcal{A}_\psi$. Hence, as $\psi(q)=0$ for all $q$ not in the union we may deduce that property (\ref{eqn:admissible}) will be satisfied by 
irrational $x \in (0,1)$ if and only if $x \in A_N$. Moreover, we have  $\lambda_1(A_N) = 2\alpha<\epsilon$. 

All that remains is to show that
\[
	\sum_{r \in \mathcal{A}_\psi} \psi^j(r) \quad > \quad 1.
\]
Via the change of variables $\ell:=rn_K^{-1}$ and $M:=Nn_K^{-1}$ we have
\begin{equation*}
	\sum_{r \in \mathcal{A}_\psi}  \psi^j(r) \quad = \quad \frac{\alpha}{N} 	\sum_{\substack{r \, | \, N: \\ n_K \, | \, r \\ r \neq n_K}} r \quad 
	= \quad	\frac{\alpha}{M}  \sum_{\substack{\ell \, > \, 1: \\ \ell \, | \, M}} \ell.
\end{equation*} 
We may now follow the original argument of Duffin \& Schaeffer. Choose $s$ large enough so that
\begin{equation*}
	\label{eqn:klarge}
	\prod_{t=1}^{s} \left(1+ 1/p_t\right) \quad > \quad 1 \, + \,  1/\alpha.
\end{equation*}
This is always possible because the above product diverges when extended over all primes.
Then, since $M=p_1 \cdots p_s$ it follows by standard arithmetic techniques that 
\begin{eqnarray}
	\frac{\alpha}{M}  \sum_{\substack{\ell \, > \, 1: \\ \ell \, | \, M}} \ell  & = & 	
	\frac{\alpha}{M}  \left(\prod_{t=1}^{s} \left(1+ p_t \right)-1\right) \nonumber \\
	& > & \alpha \left(\prod_{t=1}^{s} \left(1+ 1/p_t\right)-1\right) \nonumber\\
	& > & 1, \nonumber \
\end{eqnarray}
as required. To complete the construction of our counterexample we proceed as follows. Let $\psi_1$ satisfy the above 
properties with $R=R_1:=1$ and $\epsilon=\epsilon_1:=2^{-2}$. 
Then, for some $R_2$ we must have that $\psi_1(q)=0$ for 
all $q > R_2$. Let $\psi_2$ satisfy the above properties with $R=R_2$ and 
$\epsilon=\epsilon_2:=2^{-2}$. Continue in this fashion to choose numbers $R_t$ and construct functions $\psi_t$ satisfying the above properties for $R=R_t$ and $\epsilon=\epsilon_t:=2^{-t}$. Then, define
\begin{equation*}		
	\Psi(q):  \quad = \quad
	\begin{cases}
		\psi_1(q), & q \leq R_2. \\
		& \\
	\psi_t(q), & R_t < q \leq R_{t+1}, \quad t\geq 2.
 	\end{cases}
\end{equation*}
It is clear that
\begin{equation*}
	\sum_{r \in \mathcal{A}_\Psi} \, \Psi^j(r) 
	\quad = \quad \infty,
\end{equation*}
but for $x\in(0,1)$ the system
\begin{equation*}
	\norm{qx} \, < \, \Psi^j(q), \quad \quad \quad q \in \mathcal{A}_\Psi, 
	\quad \quad \quad q > R_t
\end{equation*}
can be satisfied only if $x$ belongs to a set of measure at most
\begin{equation*}
	\sum_{r=t}^{\infty} 2^{-r} \quad = \quad 2^{-t+1},
\end{equation*}
as desired.

\section{Ubiquitous Systems}
\label{sec:ubiquity}

Ubiquity is a fundamental tool for establishing measure theoretic statements in Diophantine approximation and will be utilised in the proof of Theorem \ref{thm:Hdivergence}. This section comprises of a brief description of a restricted form of ubiquity tailored to our needs.

The concept of ubiquitous systems was first introduced by Dodson,
Rynne \& Vickers in \cite{DRV} as a method of determining lower bounds 
for the Hausdorff dimension of limsup sets. Recently, this idea was
developed by Beresnevich, Dickinson \& Velani in \cite{limsup} to
provide a very general framework for establishing the Hausdorff
measure of a large class of limsup sets. A slightly more simplified account is presented in
\cite{limsup2}.

\subsection{The ubiquity setup}
\label{sec:Theubiquitysetup}

Let $(\Omega, d)$ be a compact metric space supporting a non-atomic probability measure $\mu$ and assume that any open subset of $\Omega$ is $\mu$-measurable.
Throughout, $B(c, r)$ will denote a ball in $\Omega$ centred at a point $c$ and of radius $r>0$. The following regularity condition will be imposed on the measure of balls: There exist positive constants $a,b, \delta$ and $r_0$ such that for any $c \in \Omega$ and $r \leq r_0$
\[
ar^{\delta} \, \leq \, \mu(B(c, r)) \, \leq \, br^{\delta}.
\]
If this power law holds then we say $\mu$ is \textit{$\delta$-Ahlfors regular}. It easily follows that if $\mu$ is $\delta$-Ahlfors regular then $\dim\Omega=\delta$ and that $\mu$ is comparable to $\delta$-dimensional Hausdorff measure $\mathcal{H}^{\delta}$. For details see Chapter~$4$ of \cite{Fal}.

Let $\mathcal{R}=\left\{R_{a} \in \Omega: a \in J \right\}$ be a collection of 
points $R_{a}$ in $\Omega$ indexed by some infinite, countable set $J$. The points 
$R_{a}$ are referred to as the \textit{resonant points}. Next, let $\beta: J \rightarrow 
\mathbb{R}_{>0}: a \mapsto \beta_{a}$ be a positive function defined on $J$ for which the number of $a \in J$ with $\beta_{a}$ bounded above is always finite. The latter condition imposed on the function $\beta$ is very natural and is often referred to as the \textit{Northcott property} in reference to Northcott's famous result that the number of algebraic numbers of bounded degree and bounded height is finite.
Finally, given an approximating function $\Psi$ define 
\[
	\Lambda(\Psi):= \left\{ x \in \Omega: x \in B(R_{a}, \Psi(\beta_{a})) 
	\text{ for infinitely many } a \in J \right\}.
\]
It is the measure of this set in which we are interested.
 
To demonstrate the `limsup' nature of $\Lambda(\Psi)$ first choose any two positive 
increasing sequences $l:=\left\{l_k \right\}$ and $u:=\left\{u_k\right\}$ 
such that $l_k < u_k$ and $\lim_{k\rightarrow\infty}l_k=\infty$. These sequences will be referred to as the 
\textit{lower} and \textit{upper} sequences respectively. For $k \in \mathbb{N}$ let 
\[
	\Lambda_l^u(\Psi, k):= \bigcup_{a \in J_l^u(k)} 
	B(R_{a}, \Psi(\beta_{a})),
\]
where $J_l^u(k):=\left\{a \in J: l_k < \beta_{a} \leq u_k \right\}$. Then, it is easily seen that 
\[
	\Lambda(\Psi)= \bigcap_{m=1}^{\infty}\bigcup_{k=m}^{\infty} \Lambda_l^u(\Psi, k).
\]

We can now define what it means to be a ubiquitous system. Let $\rho: \mathbb{R}_{>0} 
\rightarrow \mathbb{R}_{>0}$ be any function with $\rho(r) \rightarrow 0$ as $r \rightarrow 
\infty$ and let
\[
	\Delta_l^u(\rho, k):= \bigcup_{a \in J_l^u(k)} 
	B(R_{a}, \rho(u_k)).
\]
\textbf{Definition (Local $\mu$-ubiquity)} Let $B=B(c,r)$ be an arbitrary ball in $\Omega$ of radius $r \leq r_0$. Suppose there exists a function $\rho$, sequences $l$ and $u$ and 
an absolute constant $\kappa>0$ such that
\begin{equation}
	\label{eqn:ubiquity}
	\mu(B \cap \Delta_l^u(\rho, k)) \, \geq \, \kappa \mu(B) \quad \quad \forall \, k \geq k_0(B).
\end{equation}
Then the pair $(\mathcal{R}, \beta)$ is said to be a \textit{local $\mu$-ubiquitous system 
relative to} $(\rho, l, u)$. The function $\rho$ is referred to as the \textit{ubiquitous function}. 

Finally, a function $h$ is said to be \textit{$u$-regular} if there exists a strictly positive constant $\lambda<1$, which may depend on $u$,  such that for $k$ sufficiently large
\[
	h(u_{k+1}) \, \leq \, \lambda h(u_k).
\]


We now present two powerful results associated with ubiquitous systems, which have been tailored to our needs. The first theorem (see \cite[Corollary~2]{limsup}) concerns the $\mu$-measure of the limsup set $\Lambda(\Psi)$ and in our setup corresponds to the case $`s=1'$ in Theorem~\ref{thm:divergence}. The second (see \cite[Theorem~10]{limsup3}) deals with the $s$-dimensional Hausdorff measure $\mathcal{H}^s$ of $\Lambda(\Psi)$ for $0<s<1$. Due to the nature of the 
ubiquity framework it is necessary to deal with the two scenarios separately.


\begin{thmBDV1}
	Let $(\Omega, d)$ be a compact metric space equipped with a $\delta$-Ahlfors regular measure $\mu$. 
	Suppose that $(\mathcal{R}, \beta)$ is a local $\mu$-ubiquitous 
	system relative to $(\rho, l, u)$ and that $\Psi$ is a decreasing approximation function. 
	Furthermore, suppose that either $\Psi$ or $\rho$ is $u$-regular and that 
	\begin{equation*}
		\sum_{k=1}^{\infty} \left( \frac{\Psi(u_k)}{\rho(u_k)} \right)^{\delta} 
		\quad = \quad \infty.
	\end{equation*}
	Then,
	\[
		\mu\left( \Lambda(\Psi) \right) \quad = \quad  1.
	\]
\end{thmBDV1}

\begin{thmBDV2}
	Let $(\Omega, d)$ be a compact metric space equipped with a $\delta$-Ahlfors regular measure $\mu$.	Suppose that $(\mathcal{R}, \beta)$ is a local $\mu$-ubiquitous 
	system relative to $(\rho, l, u)$ and that $\Psi$ is a decreasing approximation function. 
	Furthermore, suppose that $0<s<\delta$. Let $g$ be the positive function given by 
	$g(r):=\Psi^s\rho^{-\delta}$ and let $G:=\limsup_{k\rightarrow \infty}g(u_k)$.
	\begin{itemize}
		\item[(i)] Suppose that $G=0$ and $\Psi$ is $u$-regular. Then,
			\[
				\mathcal{H}^s(\Lambda(\Psi)) = \infty \quad \quad \text{ if } \quad\quad 
				\sum_{k=1}^{\infty}g(u_k) = \infty.
			\]
		\item[(ii)] Suppose that $0<G<\infty$. Then, $\mathcal{H}^s(\Lambda(\Psi)) = \infty$.
	\end{itemize}
\end{thmBDV2}

Before proceeding, we recall a generalisation of the Cauchy condensation test attributed to Oscar Schl\"{o}milch, which can be found 
in \cite[Theorem $2.4$]{Schl}. We will appeal to this result multiple times in our proof.

\begin{thmSchl}
 	Let $\sum_{r=0}^{\infty}a_r$ be an infinite real series whose terms are positive and decreasing and let 
 	$m_0 < m_1 < \cdots$ be a strictly increasing sequence of positive integers for which there exists a constant $M>0$ such that
 	\begin{equation}
 		\label{eqn:Schl}
 		\frac{m_{k+1}-m_k}{m_k-m_{k-1}} \, \leq \, M \quad \text{ for every } k \in \mathbb{N}.
 	\end{equation}
 	Then the series $\sum_{r=0}^{\infty}a_r$ converges if and only if the series 
 	$\sum_{k=0}^{\infty}(m_{k+1}-m_k)a_{m_k}$ converges.
\end{thmSchl}
It should be noted that, taking $m_k=n_k$ for some $\mathcal{D}$-adic sequence $\left\{n_k \right\}$,
condition (\ref{eqn:Schl}) is satisfied for some $M\geq2$ if and only if the sequence $\mathcal{D}$ has bounded ratios.

\section{Proof of Theorem \ref{thm:Hdivergence}}
\label{sec:ProofOfTheoremRefThmDivergence}

For the divergence part of Theorem \ref{thm:Hdivergence} we will appeal to the ubiquity framework described in the previous section. The convergence part follows by well-known arguments stemming from the Borel-Cantelli Lemma. For completeness we include a short  proof here. 


Firstly, note that we may assume $\psi(r) < 1$ for all sufficiently large $r$, for otherwise the sum $\sum_{r\in \mathbb{N}} f_{\psi}(r)$ would surely diverge. So, for each $k \in \mathbb{N}$ sufficiently large we can find a unique natural number $m_k$ for which 
\begin{equation}
	\label{eqn:psicontrol}
	\frac{1}{n_{m_k}} \quad < \quad \psi^i(n_k) \quad \leq \quad \frac{1}{n_{m_k-1}}.
\end{equation} 
This is possible since $\psi$ is decreasing and $\mathcal{D}$ is an increasing sequence. The pseudo-absolute value is discrete, 
so for sufficiently large $k \in \mathbb{N}$ it follows from (\ref{eqn:psicontrol}) that
	\begin{eqnarray*}
		\# \left\{  q \in (n_k, n_{k+1}]: \, q \in \mathcal{A}_\psi \right\} 
		& \leq & \# \left\{  q \in (n_k, n_{k+1}]: \, 
		\infabs{q}_\mathcal{D} < \psi^i(n_k) \right\}\\
		& = & \# \left\{   q \in (n_k, n_{k+1}]: \,  \infabs{q}_\mathcal{D} \leq \frac{1}{n_{m_k}} \right\} \\
		& = & \# \left\{   q \in (n_k, n_{k+1}]: \,  n_{m_k} | \, q  \right\}\\
		& = & \frac{n_{k+1}-n_k}{n_{m_k}}\\
		& < & (n_{k+1}-n_k)\psi^i(n_k). \
	\end{eqnarray*}

Next, for each $q \in \mathcal{A}_\psi$ let $W_q$ denote the set of real numbers $x \in (0,1)$ satisfying 
\begin{equation*}
	\max \left\{\infabs{q}_{\mathcal{D}}^{1/i}, \norm{qx}^{1/j} \right\} \quad < \quad \psi(q)
\end{equation*}
and let $M \geq 2$ be an upper bound for the ratios of consecutive elements of $\mathcal{D}$; i.e., $n_{k+1}/n_k \leq M$ for all $k \in \mathbb{N}$. 
Each set $W_q$ is covered by the $q-1$ open intervals of length $2 \psi^{j}(q)/q$ with centres at the rationals $p/q$ ($p=1,\ldots, q-1$) and the two open intervals $(0, \psi^j(q)/q)$ and $(1-\psi^j(q)/q, 1)$. Let us denote by $E_q$ this collection of covering intervals. For any $k_0 \in \mathbb{N}$ we have that the countable collection
$$\bigcup_{\substack{q \in \mathcal{A}_\psi \\q \, > \, n_{k_0} }} E_q$$
is a $\rho-$cover for  $W_{\mathcal{D}}(i, j, \psi)$ for $\rho=2\psi^j(n_{k_0})/n_{k_0}$.
Thus, the value $\mathcal{H}^s_{\rho}(W_{\mathcal{D}}(i, j, \psi))$ is at most 
	\begin{eqnarray}
		\nonumber 2^s\sum_{\substack{q \in \mathcal{A}_\psi \\q \, > \, n_{k_0} }} 
		q^{1-s}\psi^{js}(q) & 
		\leq & 2^sM^{1-s}\sum_{k=k_0}^{\infty} n_{k}^{1-s}\psi^{js}(n_k)\,\,\sum_{\substack{q \in \mathcal{A}_\psi \\ 
		q \in (n_k, \, n_{k+1}]}} 1\\
		& < & 2^sM^{1-s}\sum_{k=k_0}^{\infty} 
		(n_{k+1}-n_k)n_k^{1-s}\psi^{i+js}(n_k). \label{eqn:bound}\
	\end{eqnarray}	
	However, the function $f_{\psi}=r^{1-s}\psi^{i+js}(r)$ is assumed decreasing and $\mathcal{D}$ is assumed to have bounded ratios and so we may apply Schl\"{o}milch's Theorem. The sum $\sum_{r=1}^{\infty} f_{\psi}(r)$ converges so we may take (\ref{eqn:bound}) to be as small as we wish. In particular, as $\rho \rightarrow 0$ (or equivalently as $k_0 \rightarrow \infty$) we have $\mathcal{H}^s_{\rho}(W_{\mathcal{D}}(i, j, \psi)) \rightarrow 0$ and the `convergence' part of Theorem~\ref{thm:Hdivergence} is complete.

We now demonstrate how the ubiquity framework can be applied to the set 
$W_{\mathcal{D}}(i, j, \psi)$. 
Firstly, choose a natural number~$c$. It is  easy to see that $W_{\mathcal{D}}(i, j, \psi)$ can be expressed in the form 
$\Lambda(\Psi)$ with
\[
	\Omega:= [0,1], \quad \Psi(r):= \psi^j(r)/r, \quad J:=\left\{(p,q) \in 
	\mathbb{N} \times \mathbb{N}: q \in \mathcal{A}_\psi, \, 0 \leq p \leq q \right\}, 
\]
\[
	a:=(p,q) \in J, \quad \beta_{a}:=q, \quad R_{a}:= p/q, \quad 
	u_k:=l_{k+1}:=n_{ck}, \quad \mu:=\lambda_1, \quad \delta:=1,
\]
\[
	J_l^u(k):= \left\{(p,q) \in J: n_{c(k-1)} < q \leq n_{ck} \right\}, 
	\quad \Lambda_l^u(\Psi, k):= \bigcup_{(p,q) \in J_l^u(k)} B(p/q, \psi^j(q)/q),
\]
so that
\[
	W_{\mathcal{D}}(i, j, \psi) \, \, = \, \, \limsup_{k\rightarrow \infty} \Lambda_l^u(\Psi, k).
\]
The natural number $c$ above is introduced for technical reasons and its appearance will be qualified later, suffice to say we may not take $c=1$. 

We now show that this system is locally $\lambda_1$-ubiquitous relative to $(\rho, l, u)$,
 for $l$ and $u$ as chosen above and some real positive function $\rho$ satisfying with $\rho(r) \rightarrow 0$ as $r \rightarrow \infty$. 
 It is apparent 
that an appropriate choice of ubiquitous function is $\rho(q):=\gamma/q^2 \psi^i(q)$ 
for some constant $\gamma>0$ for then the sum 
\begin{equation*}
	\sum_{k=1}^{\infty} \left( \frac{\Psi(u_k)}{\rho(u_k)} \right)^{\delta} \quad = \quad 
	\sum_{k=1}^{\infty} \frac{n_{ck}^2\psi^i(n_{ck})\psi^j(n_{ck})}{\gamma \, n_{ck}} \quad = \quad 
	\frac{1}{\gamma}\sum_{k=1}^{\infty} n_{ck}\psi(n_{ck})
\end{equation*}
diverges if and only if the sum $\sum_{r=1}^{\infty} \psi(r)$ diverges 
by the result of Schl\"{o}milch. 

Next, we point out an important observation. When $\sum_{r \in \mathbb{N}} r^{1-s}\psi^{i+js}(r)=\infty$ 
and $s \in (i,1]$ we may assume that 
\begin{equation}
	\label{eqn:assumption}
	\psi^i(r) \, > \, 1/r \quad \text{ for all } r \in \mathbb{N}. 
\end{equation}
To see this, let $\mathcal{R}:=\left\{r_k\right\}_{r \in \mathbb{N}}$ be an increasing  sequence of integers for which $\psi^i(r_k)\leq1/r_k$. Then, 
for $s \in (i, 1]$ we have
\[
	\sum_{k \in \mathbb{N}} r_k^{1-s}\psi^{i+js}(r_k) \, \leq \, \sum_{k \in \mathbb{N}} 
	r_k^{-(1+j/i)s} < 	\infty \quad \text{ and } \quad \sum_{r \, \in \, \mathbb{N} \setminus \mathcal{R}} 
	r^{1-s} \psi^{i+js}(r) = \infty.
\]
Moreover, for each $k \in \mathbb{N}$ we have
\[
	\psi^i(r_k) \, \leq \, \frac{1}{r_k} \, \leq \, \infabs{r_k}_{\mathcal{D}}
\]
and so $r_k \notin \mathcal{A}_\psi$.  The upshot is that we may choose
$J \subset \mathbb{N} \times (\mathbb{N}\setminus \mathcal{R})$ in the ubiquity setup and neither the set $W_{\mathcal{D}}(i, j, \psi)$ nor the divergence of the corresponding 
volume sum is affected by the removal of the integers $r_k$. 

Observation (\ref{eqn:assumption}) immediately implies $\rho(r) \rightarrow 0$ as $r \rightarrow \infty$ as required in the ubiquity setup. Furthermore, let $M \geq 2$ be an upper bound for the ratios of consecutive elements of $\mathcal{D}$. Then, the monotonicity of $\psi$ immediately implies that
\[
	\frac{\psi^j(n_{c(k+1)})}{n_{c(k+1)}} \, \leq \, \frac{\psi^j(n_{ck})}{n_{c(k+1)}} \, \leq \, \frac{\psi^j(n_{ck})}{M^cn_{ck}}
\]
and so $\Psi$ is trivially $u$-regular. 
Hence, to prove the `divergence' part of Theorem~\ref{thm:Hdivergence} 
it suffices to show the following holds.
\begin{prop}
 	\label{prop:ubiquity}
	Let $\rho(q):=\gamma/q^2 \psi^i(q)$ for some $\gamma>0$. Then, the system defined above is a locally $\lambda_1$-ubiquitous  relative to the triple $(\rho, n_{c(k-1)}, n_{ck})$ for some $c \in \mathbb{N}$. 
\end{prop}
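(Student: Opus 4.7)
The plan is to establish local $\lambda_1$-ubiquity via Dirichlet's theorem applied to suitable multiples of a carefully chosen element of $\mathcal{D}$. For each $k$, set $N := n_{ck}$, $L := n_{c(k-1)}$ and $\eta := \psi^i(N)$. The assumption (\ref{eqn:assumption}) guarantees $\eta > 1/N$, so the smallest member $n_m$ of $\mathcal{D}$ exceeding $1/\eta$ satisfies $1/\eta < n_m \leq M/\eta$ and $n_m \leq N$, where $M \geq 2$ denotes an upper bound for the ratios in $\mathcal{D}$. For any $x \in [0,1]$, Dirichlet's theorem applied to $n_m x$ with parameter $Q := \lfloor N/n_m \rfloor \geq 1$ produces $q' \in \mathbb{N}$ and $p \in \mathbb{Z}$ with $1 \leq q' \leq Q$ and $\infabs{x - p/(n_m q')} \leq 1/(n_m q' Q)$. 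Writing $q := n_m q' \in [n_m, N]$, one has $\infabs{q}_{\mathcal{D}} \leq 1/n_m < \eta \leq \psi^i(q)$, so $q \in \mathcal{A}_\psi$ and $(p, q) \in J$.

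Next, fix an arbitrary ball $B = B(x_0, r) \subset [0,1]$ and let $E_L \subset [0,1]$ denote the set of $x$ whose Dirichlet denominator above falls in $[1, L]$. If $n_m > L$ then $E_L = \emptyset$; otherwise,
\[
	E_L \, \subseteq \, \bigcup_{q' \leq L/n_m} \, \bigcup_{0 \leq p \leq n_m q'} B\!\left(p/(n_m q'), \, 1/(n_m q' Q)\right).
\]
Counting the rationals $p/(n_m q')$ near $B$ for each fixed $q'$ and summing, a routine computation yields
\[
	\lambda_1(E_L \cap B) \, \leq \, C \cdot (L/N) \cdot \lambda_1(B) \, + \, O\!\left(\tfrac{\log(L/n_m)}{N}\right),
\]
where $C$ depends only on $M$. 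As every consecutive ratio in $\mathcal{D}$ is at least $2$, the quantity $L/N = n_{c(k-1)}/n_{ck}$ is at most $2^{-c}$. Choosing $c$ so large that $C \cdot 2^{-c} < 1/2$, and then $k \geq k_0(B)$ large enough that the additive error sits below $\lambda_1(B)/4$, delivers $\lambda_1(B \setminus E_L) \geq \tfrac{1}{4}\lambda_1(B)$.

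Finally, for $x \in B \setminus E_L$ the Dirichlet output gives $q \in (L, N] \cap \mathcal{A}_\psi$ and satisfies
\[
	\infabs{x - p/q} \, \leq \, \frac{1}{qQ} \, \leq \, \frac{2n_m}{qN} \, \leq \, \frac{2n_m}{LN} \, \leq \, \frac{2M}{\eta L N},
\]
which is $\leq \rho(N) = \gamma/(N^2\eta)$ precisely when $\gamma \geq 2MN/L$. Since $N/L \leq M^c$, the choice $\gamma := 2M^{c+1}$ suffices; this places $x$ in a ball $B(p/q, \rho(N))$ indexed by $(p, q) \in J_l^u(k)$, so $B \setminus E_L \subseteq \Delta_l^u(\rho, k)$ and the ubiquity inequality holds with $\kappa := 1/4$. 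The main technical obstacle is the estimate for $\lambda_1(E_L \cap B)$: it must scale linearly with $\lambda_1(B)$ rather than being merely a global bound. This is exactly what forces a multiplicatively wide denominator window and so prevents the choice $c = 1$ foreshadowed by the authors.
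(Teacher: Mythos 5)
Your proof is correct and follows essentially the same route as the paper: apply Dirichlet's theorem to $n_m x$ (where $n_m$ is a member of $\mathcal{D}$ chosen just above $1/\psi^i(n_{ck})$) to manufacture rationals $p/q$ with $q$ divisible by $n_m$ and $n_m \leq q \leq n_{ck}$, then estimate the exceptional set on which the resulting denominator is too small ($q \leq n_{c(k-1)}$) and show it occupies at most three quarters of an arbitrary ball once $c$ is large; the only cosmetic difference is that you pick $n_m$ from the full sequence $\mathcal{D}$ while the paper works with the $c$-thinned subsequence $\{n_{cj}\}$, which only shifts the constant $\gamma$. Both arguments hinge on the same two observations — that $n_{c(k-1)}/n_{ck} \leq 2^{-c}$ forces the bad set to scale down geometrically in $c$, and that assumption (\ref{eqn:assumption}) keeps the scaled Dirichlet parameter $\geq 1$ — so this is the paper's proof in a slightly different dress.
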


We begin by modifying the sequence specified in 
(\ref{eqn:psicontrol}). Once more we may assume that $\psi(r) < 1$ for large $r$ and so for any sufficiently large $k \in \mathbb{N}$ and any $c \in \mathbb{N}$ we can find a unique natural number $m_k:=m_k(c)$  for which 
\begin{equation}
	\label{eqn:cpsicontrol}
	\frac{1}{n_{cm_k}} \quad < \quad \psi^i(n_{ck}) \quad \leq \quad \frac{1}{n_{c(m_k-1)}}.
\end{equation} 
To prove Proposition~\ref{prop:ubiquity} we  require 
the following consequence of a classical theorem of Dirichlet. 
\begin{prop}
		\label{prop:Dirichlet}
		Fix $c \in \mathbb{N}$. Then, for every $x \in \mathbb{R}$ and every $k \in \mathbb{N}$ 
		there exists $p/q \in \mathbb{Q}$	with $n_{cm_k} \leq q \leq n_{ck}$ such that 
		\begin{equation}
			\label{eqn:Dirichlet}
			\infabs{x-\frac{p}{q}} \quad < \quad \frac{n_{cm_k}}{qn_{ck}} \quad \quad \text{ and } \quad \quad 
			\infabs{q}_\mathcal{D} \quad \leq \quad \frac{1}{n_{cm_k}}.
		\end{equation}
	\end{prop}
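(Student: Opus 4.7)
The plan is to reduce the statement to a direct application of the classical Dirichlet pigeonhole theorem after rescaling $x$ by $n_{cm_k}$. Recall that the condition $|q|_{\mathcal{D}} \leq 1/n_{cm_k}$ simply encodes the divisibility requirement $n_{cm_k} \mid q$. This suggests the following approach: search for $q$ among multiples of $n_{cm_k}$ not exceeding $n_{ck}$. Since $\mathcal{D}$ is a $\mathcal{D}$-adic sequence, the quantity $N := n_{ck}/n_{cm_k}$ is a positive integer (provided $m_k \leq k$), so such multiples are exactly the numbers $q = \ell n_{cm_k}$ for $\ell = 1, 2, \ldots, N$.

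First I would verify that $m_k \leq k$, so the statement is meaningful. By the working assumption (\ref{eqn:assumption}) we have $\psi^i(n_{ck}) > 1/n_{ck}$, while (\ref{eqn:cpsicontrol}) provides the upper bound $\psi^i(n_{ck}) \leq 1/n_{c(m_k-1)}$. Combining these gives $n_{c(m_k-1)} < n_{ck}$, and monotonicity of $\mathcal{D}$ yields $m_k \leq k$ as required.

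Next, I would apply the standard Dirichlet theorem to the real number $\alpha := n_{cm_k}\,x$ with quality parameter $Q := N = n_{ck}/n_{cm_k} \geq 1$. This produces integers $\ell \in \{1, \ldots, N\}$ and $p \in \mathbb{Z}$ satisfying
\[
   \bigl| \ell \, n_{cm_k} \, x - p \bigr| \; < \; \frac{1}{N} \; = \; \frac{n_{cm_k}}{n_{ck}}.
\]
Setting $q := \ell \, n_{cm_k}$ gives $n_{cm_k} \leq q \leq n_{ck}$ immediately, while $n_{cm_k} \mid q$ provides $|q|_{\mathcal{D}} \leq 1/n_{cm_k}$. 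Dividing the displayed inequality by $q$ then delivers the first inequality of (\ref{eqn:Dirichlet}).

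There is no serious obstacle here; the whole statement is essentially Dirichlet's theorem applied to $n_{cm_k}\,x$, rather than $x$ itself, so as to force the denominator to carry the required divisibility. The only item requiring care is the verification that $m_k \leq k$, which is where assumption (\ref{eqn:assumption}) enters the argument.
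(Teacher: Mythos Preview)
Your proof is correct and follows essentially the same approach as the paper: both apply Dirichlet's theorem to the rescaled real number $n_{cm_k}x$ with parameter $N = n_{ck}/n_{cm_k}$, then set $q = n_{cm_k}\ell$ where $\ell$ is the denominator produced by Dirichlet. Your verification that $m_k \leq k$ (so that $N \geq 1$) is simply a slightly more explicit rendering of what the paper compresses into the remark that observation~(\ref{eqn:assumption}) guarantees $N \geq 1$.
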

	\begin{proof}[Proof of Proposition \ref{prop:Dirichlet}]
		Dirichlet's theorem states that for all $x' \in \mathbb{R}$ and for all $N \in \mathbb{N}$ there exists $p/q' \in \mathbb{Q}$ with $q' \leq N$ such that
		\[
			\infabs{x'-p/q'} \quad < \quad 1/q'N.
		\]
 		Let $N:=n_{ck}/n_{cm_k}$. Observation (\ref{eqn:assumption}) guarantees that $N\geq1$. Next, set $x:=x' n_{cm_k}$  
 		and $q=n_{cm_k}q'$. Then, for all $x\in \mathbb{R}$ we have
 		\[
			\infabs{xn_{cm_k}-\frac{pn_{cm_k}}{q}} \quad < \quad \frac{n_{cm_k}^2}{qn_{ck}} 
		\]
 		whereby upon division by $n_{cm_k}$ the desired inequality is reached. Furthermore, $n_{cm_k} \leq q\leq n_{cm_k}n_{ck}/n_{cm_k} = n_{ck}$ and $\infabs{q}_\mathcal{D} \leq 1/n_{cm_k}$ as required.
	\end{proof}

	In what follows, for $r \in \mathbb{N}$ we denote by $K^-(r)$ the set of $q \in \mathbb{N}$ with $\infabs{q}_{\mathcal{D}} \leq 1/n_{cm_k}$ for which $q \leq n_{c(r-1)}$, whereas $K^+(r)$ will denote the set of
	$q \in \mathbb{N}$ with $\infabs{q}_{\mathcal{D}} \leq 1/n_{cm_k}$ that satisfy $n_{c(r-1)} < q \leq n_{cr}$.
	Recall that $\rho(r):=\gamma/r^2 \psi^i(r)$ for some $\gamma>0$.
	

 To prove Proposition \ref{prop:ubiquity} it now suffices to show that for every interval $I\subset [0,1)$ there exists an absolute 
constant $\kappa>0$ such that
	\begin{equation}
	\label{eqn:sufficient}
	\lambda_1\left(I \cap \bigcup_{\substack{q \in \mathcal{A}_\psi: \\ q \in (n_{c(k-1)}, \, n_{ck}]}} \, \, 
	\bigcup_{p=0}^{q-1}	\: B\left(\frac{p}{q}, \rho(n_{ck})\right)\right) \, \geq \,\kappa \lambda_1(I)
\end{equation}
for all $k$ sufficiently large.  Assume $M\geq2$ is an upper bound for the ratios of consecutive elements of $\mathcal{D}$. 
Upon setting $\gamma=M^{2c}$ it is easily verified that the LHS of (\ref{eqn:sufficient}) is bounded below by 
\begin{equation}
	\label{eqn:reform}
	\lambda_1\left(I \cap \bigcup_{K^+(k)} \:
	\bigcup_{p=0}^{q-1}	\: B\left(\frac{p}{q}, \frac{n_{cm_k}}{q \, n_{ck}} \right) \right).
\end{equation}
To see this simply note that for $n_{c(k-1)} < q \leq n_{ck}$ we have
\[
	n_{ck} \, \, < \, \, q \prod_{t=c(k-1)+1}^{ck} \frac{n_t}{n_{t-1}} \, \, \leq \, \, q M^{ck-(c(k-1) +1) +1} 
	\, \, = \, \, q M^c
\]	
and by definition
\[
	n_{cm_k} \, \,  = \, \, n_{c(m_k-1)} \prod_{s=c(m_k-1)+1}^{cm_k} \frac{n_s}{n_{s-1}} \, \, \leq 
	\, \,  n_{c(m_k-1)} M^c \, \, \leq \, \, \psi^{-i}(n_{ck})M^c.
\]

Proposition \ref{prop:Dirichlet}  implies that the value in $(\ref{eqn:reform})$ exceeds $\lambda_1(I) - \lambda_1(\mathcal{J})$, where
\[
	\mathcal{J}:= \bigcup_{K^-(k)} \:
	\bigcup_{p=0}^{q-1}	\: B\left(\frac{p}{q}, \frac{n_{cm_k}}{q \, n_{ck}} \right).
\]
However, for  each $q$  there are at most 
	$\lambda_1(I)q+3$ possible choices for $p$ and so
	\begin{eqnarray*}
		\lambda_1(\mathcal{J}) & \leq & 2 \sum_{K^-(k)} \:
		\frac{n_{cm_k}}{qn_{ck}} \left( \lambda_1(I)q+3 \right)\\	
		& = & 2\lambda_1(I)\frac{n_{cm_k}}{n_{ck}} \sum_{K^-(k)} \: 1 \quad + \quad 
		\frac{6n_{cm_k}}{n_{ck}} \sum_{K^-(k)} \: \frac{1}{q}.\
	\end{eqnarray*}
	For each $r \in \mathbb{N}$ the cardinality of $K^+(r)$ is bounded above by $(n_{cr}-n_{c(r-1)})/n_{cm_k}$.
	Therefore,
	\begin{eqnarray*}
		\frac{6n_{cm_k}}{n_{ck}} \: \sum_{K^-(k)} \: \frac{1}{q} 
		\quad &\leq & \quad \frac{6n_{cm_k}}{n_{ck}} \: \sum_{r=1}^{k-1}
		\: \sum_{K^+(r)} \frac{1}{q}\\
		& < & \quad \frac{6n_{cm_k}}{n_{ck}} \: \sum_{r=1}^{k-1}
		\: \frac{(n_{cr}-n_{c(r-1)})}{n_{c(r-1)}n_{cm_k}} \\
		& < & \quad \frac{6(M^c-1)(k-1)}{n_{ck}} \\
		& < & \quad \frac{\lambda_1(I)}{4},\
	\end{eqnarray*}
	for $k$ large enough.
	Moreover, the cardinality of $K^-(r)$ is bounded above by 	$n_{c(r-1)}/n_{cm_k}$ for $r \in \mathbb{N}$ and so
	\begin{eqnarray*}
		2\lambda_1(I)\frac{n_{cm_k}}{n_{ck}} \sum_{K^-(k)} \: 1 \quad & \leq & \quad
		2\lambda_1(I)\frac{n_{c(k-1)}}{n_{ck}} \\
		& \leq & 2\lambda_1(I) 2^{-(ck-c(k-1))} \\
		& = & 2^{1-c} \lambda_1(I).\
	\end{eqnarray*}
	It follows that for $c \geq 2$ and for sufficiently large $k$ we have $\lambda_1(\mathcal{J}) \leq 3\lambda_1(I)/4$, 
	and inequality (\ref{eqn:sufficient}) indeed holds with $\kappa=1/4$.	

\subsection*{Acknowledgements}
The authors would like to convey their continuing gratitude to Sanju Velani for introducing the authors to the problems at hand. SH would like to thank the referees for their many valuable and exceptionally useful comments.

\providecommand{\bysame}{\leavevmode\hbox to3em{\hrulefill}}
\providecommand{\MR}{\relax\ifhmode\unskip\space\fi MR }
\providecommand{\MRhref}[2]{%
  \href{http://www.ams.org/mathscinet-getitem?mr=#1}{#2}
} \providecommand{\href}[2]{#2}

\end{document}